\documentclass[11pt]{article}
\usepackage[dvips]{graphicx}
\usepackage{amssymb}
\usepackage{amsmath}
\usepackage{amsthm}
\usepackage{amsfonts}
\usepackage{amsthm,amscd}
\usepackage{amsbsy}
\usepackage{bm}
\usepackage{url} 
\usepackage{setspace}
\usepackage{float}
\usepackage{psfrag}
\usepackage{cite}
\textwidth130mm
\textheight200mm
\parindent0em
\parskip2ex plus1ex minus1ex
\setcounter{secnumdepth}{2}
\restylefloat{figure}

\newtheorem{theorem}{Theorem}
\newtheorem{lemma}{Lemma}

\newtheorem{problem}{Problem}

\newtheorem{proposition}{Proposition}

\newtheorem{remark}{Remark}
\newtheorem{corollary}{Corollary}
\usepackage{setspace}

\def\be{\begin{equation}}
\def\ee{\end{equation}}
\def\ben{\begin{eqnarray}}
\def\een{\end{eqnarray}}
\newcommand{\bzero}{\mbox{$\bf 0$}}
\newcommand{\la}{\langle}
\newcommand{\ra}{\rangle}

\newcommand{\bn}{\mathbf n}
\newcommand{\BBR}{\mathbb R}

\newcommand{\bQ}{\mathbf{Q}}

\newcommand{\fb}{\mathbf{f}}

\newcommand{\bv}{\mathbf{v}}

\newcommand{\bV}{\mathbf{V}}
\newcommand{\balpha}{\mbox{\boldmath{$\alpha$}}}
\newcommand{\bbeta}{\mbox{\boldmath{$\beta$}}}
\newcommand{\bsigma}{\mbox{\boldmath{$\sigma$}}}
\newcommand{\btau}{\mbox{\boldmath{$\tau$}}}

\newcommand{\cQ}{\mathcal{Q}}
\newcommand{\cT}{\mathcal{T}}
\newcommand{\bx}{\mathbf{x}}

\newcommand{\bu}{\mathbf{u}}

\newcommand{\C}{\mathcal{C}}

\DeclareMathOperator*{\argmin}{arg\,min}

\newcommand{\sbtau}{\mbox{\boldmath{\scriptsize$\tau$}}}

\title{A New Minimisation Principle for Poisson Equation Leading 
to a Flexible Finite Element Approach}
\author{Bishnu P. Lamichhane
\thanks{School of Mathematical and Physical Sciences, University of Newcastle, 
Callaghan, NSW 2308,  {\tt Bishnu.Lamichhane@newcastle.edu.au}}}
\begin{document}
\maketitle
\begin{abstract}
We introduce  a  new minimisation principle 
for Poisson equation using two variables: the solution and the gradient 
of the solution. This principle allows us to 
use any conforming finite element spaces for both variables, where 
the finite element spaces do not need to satisfy a so-called 
inf-sup condition.  A numerical example demonstrates the 
superiority of the approach. 
 \end{abstract}

\noindent\textit{Key words: Poisson equation, minimisation principle, 
mixed finite element method,  a priori error estimate}\\
\noindent\textit{AMS subject classification: 65D15, 65L60, 41A15}

\section{Introduction}

It is often more important to get the accurate approximation of 
the gradient of the solution of a Poisson equation. In that case, 
a mixed formulation of the Poisson equation is used, where there 
are two unknowns - the solution and its gradient - in the variational equation. 
Discretising a mixed formulation of a partial differential equation is 
a challenging task as the involved finite element spaces should 
satisfy a compatibility condition - so called inf-sup condition. 
Although there are many finite 
element spaces discovered satisfying the compatibility condition for 
the Poisson equation \cite{BF91,BS94,Bra01,MTW02,BHM05,BD06,UDG06},
 it is not so easy for mixed formulations 
of other partial differential equations. It is sometimes 
useful to use a least-squares finite element method 
to approximate the solution and its gradient 
simultaneously \cite{BG98,BG05,BG09}. A least-squares formulation 
allows the use of any conforming finite element spaces avoiding 
the compatibility condition.

In this paper, we propose a new minimisation principle 
for Poisson equation using the solution and the 
gradient of the solution as 
two unknowns. 
This formulation is similar to 
a least-squares formulation in the sense that it allows the 
use of any conforming finite element spaces avoiding 
the compatibility condition \cite{Jia98,BG98,BG05,BG09}.
However,  in comparison to a least-squares finite element method, 
the source term $f$ can be in the dual of $H^1$-space, and the  gradient 
can be discretised using a $L^2$-conforming finite element space. 
We also prove optimal a priori error 
estimates for the proposed finite element method.

The structure of the rest of the paper is organised as follows. 
In the next section, we introduce our
formulation  and show its well-posedness. We propose 
 finite element methods for the given formulation and prove 
a priori error estimates in Section 3.
A numerical example with discretisation errors are presented in Section 4 and a short conclusion is drawn in Section 5.

\section{A New Formulation of Poisson equation}
In this section we introduce a new minimisation principle of the 
Poisson problem. Let  $\Omega \subset \BBR^d$, 
$ d\in \{2,3\}$,  be a bounded convex domain 
with polygonal or polyhedral boundary  $\partial\Omega$ with  
the outward pointing normal $\bn$ on  $\partial\Omega$.

 We start with the following minimisation problem for the Poisson problem 
 \begin{problem}\label{prob1}
 Given $f \in H^{-1}(\Omega)$ we want to find 
\begin{equation}\label{vbiharm}
u = \argmin\limits_{\substack{v \in H^1_0(\Omega)}}K(v)
\end{equation}
with 
\begin{equation}\label{func}
K(v)=\frac{1}{2} \int_{\Omega}|\nabla v|^2\,dx-\ell(v).
\end{equation}
where  \[ \ell(v) = \int_{\Omega}f\,v\,dx.\]
\end{problem}
We refer to the following references \cite{BF91,BS94,Bra01,MTW02,ABC03,BHM05,BD06,UDG06,
Ste10,Lam12a} for different variational formulations of Poisson equation. 

Let $V=H_0^1(\Omega)$ and $\bQ=[L^2(\Omega)]^d$,
and  for two vector-valued functions 
$\balpha:\Omega \rightarrow \BBR^d$ and
$\bbeta:\Omega \rightarrow \BBR^d$,
the Sobolev inner product on the Sobolev space 
$H^k(\Omega)$  ($ k \in \BBR$) be defined as 
\[ \la \balpha,\bbeta\ra_{k,\Omega}:={\sum_{i=1}^d \la 
\alpha_{i},\beta_{i}\ra_{k,\Omega}},
\] 
where $(\balpha)_{i}=\alpha_{i},\; (\bbeta)_{i}=\beta_{i}$
with $\alpha_{i}, \beta_{i} \in  H^k(\Omega)$, 
for $i=1,\cdots,d$,  and the norm $\|\cdot\|_{H^k(\Omega)}$ is 
induced from this inner product. We will use 
the standard notation 
$\|\cdot \|_{k,\Omega}$ for the norm in the $H^k(\Omega)$-space. 
We now introduce  a 
functional $J_{\alpha,\gamma}(v,\btau;f)$ with 
\[ J_{\alpha,\gamma} (v,\btau;f) =  \|\btau \|^2_{0,\Omega} + 
\|\btau - \alpha\nabla v\|^2_{0,\Omega} + \gamma \ell(v),\]
where $\alpha>0$ and $\gamma$ are two fixed constants, and 
consider another minimisation problem for two variables $(v,\btau) \in [V\times \bQ]$ 
\begin{equation}\label{mpoisson} 
\argmin\limits_{\substack{(v,\sbtau) \in [V\times \bQ]}} J_{\alpha,\gamma}(v,\btau;f).
\end{equation}

The minimisation problem is equivalent 
to finding  
$(u,\bsigma) \in [V\times \bQ]$ such that 
\begin{equation}\label{mvarm} 
a((u,\bsigma),(v,\btau)) = -\frac{\gamma}{2}\ell(v),\quad (v,\btau) \in [V\times \bQ],
\end{equation}
where the bilinear form $a(\cdot,\cdot)$ is defined as 
\[  a((u,\bsigma),(v,\btau))   = 
 (\bsigma, \btau)_{0,\Omega} 
+ (\bsigma - \alpha \nabla u, \btau -\alpha  \nabla v)_{0,\Omega}.\]

Standard arguments can be used to show the 
continuity of the bilinear form $a(\cdot,\cdot)$ on  
the space $V\times \bQ$. Now we show that 
the bilinear form $a(\cdot,\cdot)$ is coercive 
on $V\times \bQ$. 
\begin{lemma}
Let $\alpha$ and $\gamma$ be two constants with $\alpha>0$ .
For  $(u,\bsigma) \in [V\times \bQ]$ 
the bilinear from $a(\cdot,\cdot)$ satisfies 
\[ a((u,\bsigma),(u,\bsigma)) \geq  \frac{\alpha}{\alpha +2C_1} \left(
\|u\|^2_{1,\Omega} + \|\bsigma\|^2_{0,\Omega}\right),\]
where  $C_1$ is the constant in the 
 Poincar\'e inequality
\[ \|u\|^2_{1,\Omega}  \leq C_1 \|\nabla u\|_{0,\Omega}^2.\]
\end{lemma}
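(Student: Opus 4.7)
The plan is to bound $\|\bsigma\|^2_{0,\Omega}$ and $\|u\|^2_{1,\Omega}$ separately by $a((u,\bsigma),(u,\bsigma))$ and then add the two estimates. Since the bilinear form is a sum of two nonnegative squared $L^2$-norms, discarding the second one gives the first estimate for free,
\[
\|\bsigma\|^2_{0,\Omega}\;\le\;a((u,\bsigma),(u,\bsigma)),
\]
so the real content is the bound on $u$.

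The key algebraic observation is the trivial identity
\[
\alpha\nabla u \;=\; \bsigma - (\bsigma - \alpha\nabla u),
\]
which lets me convert the two pieces of $a(\cdot,\cdot)$ into control of the gradient. Applying the elementary bound $\|\mathbf a-\mathbf b\|^2_{0,\Omega}\le 2\|\mathbf a\|^2_{0,\Omega}+2\|\mathbf b\|^2_{0,\Omega}$ to this identity yields
\[
\alpha^2\|\nabla u\|^2_{0,\Omega}\;\le\;2\bigl(\|\bsigma\|^2_{0,\Omega}+\|\bsigma-\alpha\nabla u\|^2_{0,\Omega}\bigr)\;=\;2\,a((u,\bsigma),(u,\bsigma)).
\]
The Poincar\'e inequality stated in the hypothesis then upgrades the left-hand side from the gradient seminorm to the full $H^1$-norm, giving $\|u\|^2_{1,\Omega}\le (2C_1/\alpha^2)\,a((u,\bsigma),(u,\bsigma))$.

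Adding the two bounds produces
\[
\|u\|^2_{1,\Omega}+\|\bsigma\|^2_{0,\Omega}\;\le\;\bigl(1+2C_1/\alpha^2\bigr)\,a((u,\bsigma),(u,\bsigma)),
\]
which inverts to the desired coercivity estimate. I expect no real obstacle: the argument is essentially one application of $\|\mathbf a-\mathbf b\|^2\le 2\|\mathbf a\|^2+2\|\mathbf b\|^2$ combined with Poincar\'e, and no cross-term manipulation or inf-sup machinery is needed. The one point that merits a second look is the precise prefactor: my calculation yields $\alpha^2/(\alpha^2+2C_1)$, which agrees with the stated $\alpha/(\alpha+2C_1)$ only at $\alpha=1$, so I would double-check the scaling of $\alpha$ in the constant before finalising the statement. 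Either form is enough for coercivity.
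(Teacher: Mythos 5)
Your proof is correct and follows essentially the same route as the paper: bound $\alpha\nabla u$ by the two terms of $a(\cdot,\cdot)$ via the triangle inequality, apply the Poincar\'e inequality, and add the trivial bound $\|\bsigma\|^2_{0,\Omega}\le a((u,\bsigma),(u,\bsigma))$. Your worry about the prefactor is justified and resolves in your favour: the paper's own first step writes $\frac{C_1}{\alpha}\|\alpha\nabla u\|^2_{0,\Omega}$ where the Poincar\'e inequality actually gives $\frac{C_1}{\alpha^2}\|\alpha\nabla u\|^2_{0,\Omega}$, so the correct coercivity constant is your $\alpha^2/(\alpha^2+2C_1)$, and the stated $\alpha/(\alpha+2C_1)$ follows from it only when $\alpha\ge 1$ --- which happens to cover the paper's eventual choice $\alpha=2$, but not the lemma's claimed range of all $\alpha>0$.
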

\begin{proof}
The proof follows from a triangle inequality and 
Poincar\'e inequality:
\begin{align*}
\|u\|^2_{1,\Omega} + \|\bsigma\|^2_{0,\Omega} 
&\leq \frac{C_1}{\alpha} \|\alpha \nabla u\|^2_{0,\Omega} + 
 \|\bsigma\|^2_{0,\Omega}  \\ & \leq 
 \frac{2C_1}{\alpha} \left[ \|\bsigma-\alpha\nabla u\|^2_{0,\Omega} + 
 \|\bsigma\|^2_{0,\Omega} \right] +  \|\bsigma\|^2_{0,\Omega}\\ &\leq
 \frac{2C_1+\alpha}{\alpha} \left( \|\bsigma\|^2_{0,\Omega} + \|\bsigma-\alpha\nabla u\|^2_{0,\Omega}\right)  \\ & = 
   \frac{2C_1+\alpha}{\alpha}  a((u,\bsigma),(u,\bsigma)).
\end{align*}
\end{proof}
\begin{corollary}
Since the bilinear form $a(\cdot,\cdot)$ is continuous and 
coercive on $V\times \bQ$, and the linear form $\ell(v)$ is also continuous on $V$ for $f\in H^{-1}(\Omega)$, the problem of  finding  
$(u,\bsigma) \in [V\times \bQ]$ such that 
\begin{equation}\label{mvarm} 
a((u,\bsigma),(v,\btau)) = -\frac{\gamma}{2}\ell(v),\quad (v,\btau) \in [V\times \bQ],
\end{equation}
has a unique solution from Lax-Milgram lemma. 
\end{corollary}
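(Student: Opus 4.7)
The plan is to apply the Lax--Milgram lemma on the product Hilbert space $V\times \bQ$, equipped with the natural norm $\|(v,\btau)\|^2 := \|v\|_{1,\Omega}^2 + \|\btau\|_{0,\Omega}^2$. Since $V=H^1_0(\Omega)$ and $\bQ=[L^2(\Omega)]^d$ are Hilbert spaces, their product is a Hilbert space, so only three facts have to be checked: continuity of $a(\cdot,\cdot)$, coercivity of $a(\cdot,\cdot)$, and continuity of the right-hand side functional.

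First, I would verify continuity of $a(\cdot,\cdot)$. Expanding the definition and using Cauchy--Schwarz on each $L^2$ inner product, together with the triangle inequality $\|\bsigma-\alpha\nabla u\|_{0,\Omega}\le \|\bsigma\|_{0,\Omega}+\alpha\|u\|_{1,\Omega}$ (and likewise for $(v,\btau)$), yields an estimate of the form $|a((u,\bsigma),(v,\btau))|\le C(\alpha)\,\|(u,\bsigma)\|\,\|(v,\btau)\|$ with a constant depending only on $\alpha$. This is the step the author alludes to as following from "standard arguments."

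Second, coercivity is exactly the content of the preceding Lemma, which provides $a((u,\bsigma),(u,\bsigma))\ge \frac{\alpha}{\alpha+2C_1}\bigl(\|u\|_{1,\Omega}^2+\|\bsigma\|_{0,\Omega}^2\bigr)$, and the coercivity constant is strictly positive because $\alpha>0$. Third, the linear form $(v,\btau)\mapsto -\tfrac{\gamma}{2}\ell(v)$ is continuous on $V\times\bQ$: since $f\in H^{-1}(\Omega)$, we have $|\ell(v)|\le \|f\|_{-1,\Omega}\|v\|_{1,\Omega}\le \|f\|_{-1,\Omega}\|(v,\btau)\|$, so the form has norm at most $\tfrac{|\gamma|}{2}\|f\|_{-1,\Omega}$ and in particular does not depend on the $\btau$ component.

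With all three hypotheses in place, the Lax--Milgram lemma delivers a unique $(u,\bsigma)\in V\times\bQ$ satisfying the variational equation. There is no real obstacle here; the only point to keep straight is that the continuity of the right-hand side uses only the $H^{-1}$--$H^1_0$ duality pairing, and that the coercivity degenerates as $\alpha\downarrow 0$, which is harmless because we have fixed $\alpha>0$ throughout.
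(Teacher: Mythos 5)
Your proposal is correct and follows exactly the route the paper intends: coercivity is quoted from the preceding lemma, continuity of $a(\cdot,\cdot)$ and of the right-hand side are the ``standard arguments'' the paper leaves implicit, and the Lax--Milgram lemma on the product Hilbert space $V\times \bQ$ concludes. You have merely filled in the details the paper omits; there is no difference in approach.
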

 \begin{remark}
 In contrast to the standard least-squares method, where we need to have $\ell \in L^{2}(\Omega)$, 
 we have here $f \in H^{-1}(\Omega)$.  Thus the standard least-squares method 
 cannot handle the situation if the source function is not $L^2$, whereas the new approach 
 requires exactly the same regularity for $f$ as the standard Galerkin approach.
 \end{remark}
Let $(u_e, \bsigma_e) \in V \times \bQ$ be the solution of  the minimisation problem 
\eqref{mpoisson}. 
We now choose $\alpha$ and $\gamma$ in such a way that 
the solution $(u,\bsigma)$  of the minimisation problem \eqref{mpoisson}.
satisfies $u=u_e$ and $ \bsigma_e = \nabla u$.
Here the natural norm for an element $(v, \btau) \in  V \times \bQ$ of the product space 
$V \times \bQ$ is $\sqrt{\|v\|^2_{1,\Omega} + \|\btau\|^2_{0,\Omega}}$. 
Thus  \eqref{mpoisson} leads to the problem of finding 
$(u,\bsigma) \in [V\times \bQ]$ such that 
\begin{equation*}
 2(\bsigma, \btau)_{0,\Omega} 
+ 2 (\bsigma - \alpha \nabla u, \btau - \alpha \nabla v)_{0,\Omega} 
+ \gamma \ell(v) =0, \quad (v,\btau) \in [V\times \bQ]. 
\end{equation*}
Letting the test functions $\btau =\bzero$  and $v=0$ successively 
in the above equation leads to 
\begin{equation}\label{mform}
\begin{array}{lccccc}
- 2 (\bsigma - \alpha \nabla u,  \alpha \nabla v)_{0,\Omega} 
+ \gamma \ell(v) & = & 0, \quad  v \in V, \\ 
 (\bsigma, \btau)_{0,\Omega}  + 
 (\bsigma - \alpha \nabla u, \btau )_{0,\Omega}  & = & 0,\quad 
\btau \in \bQ.
\end{array}
\end{equation}
The second equation immediately yields 
 \[ (2\bsigma- \alpha \nabla u, \btau)_{0,\Omega}   =  0,\quad 
\btau \in \bQ,\]
and hence $\alpha =2$ ensures that $\bsigma = \nabla u$.  
Using $\bsigma = \nabla u$ in the first equation of \eqref{mform}, 
we have 
\[ -2 \alpha (1-\alpha) (\nabla u, \nabla v)_{0,\Omega}  + \gamma \ell(v) =0.\]
We have the standard variational problem for the Poisson equation 
if $ \gamma = 2 \alpha (1-\alpha)$, and thus setting $\alpha =2$, we get 
$\gamma = -4$.  
Now we have the following problem. 
\begin{problem} \label{prob2}
Given $f \in H^{-1}(\Omega)$, 
the variational equation for the  
minimisation problem is to find 
$(u,\bsigma) \in [V\times \bQ]$ such that 
\begin{equation}\label{mvarm} 
a((u,\bsigma),(v,\btau)) = 2\ell(v),\quad (v,\btau) \in [V\times \bQ],
\end{equation}
where the bilinear form $a(\cdot,\cdot)$ is defined as 
\[  a((u,\bsigma),(v,\btau))   = 
 (\bsigma, \btau)_{0,\Omega} 
+ (\bsigma - 2\nabla u, \btau -2 \nabla v)_{0,\Omega}.\]
 \end{problem}
 From the above discussion we have the following proposition. 
 \begin{proposition}
 Let $u$ be the solution of  Problem \ref{prob1} and $(\tilde u,\tilde \bsigma)$ of Problem \ref{prob2}. 
 Then we have  $\tilde u = u$ and $\tilde \bsigma = \nabla u$. 
 \end{proposition}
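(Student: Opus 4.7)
The plan is to take the variational equation in Problem \ref{prob2} and decouple it by testing in the two components of $V\times \bQ$ separately, following the same scheme already used in the derivation leading up to the statement. This should recover (i) the identification $\tilde\bsigma=\nabla\tilde u$ and (ii) the standard Galerkin variational problem for $\tilde u$, which by uniqueness forces $\tilde u=u$.

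First I would test \eqref{mvarm} with $(v,\btau)=(0,\btau)$ for arbitrary $\btau\in\bQ$. With $\alpha=2$ this yields
\[ (\tilde\bsigma,\btau)_{0,\Omega}+(\tilde\bsigma-2\nabla\tilde u,\btau)_{0,\Omega}=0, \qquad \btau\in\bQ,\]
so $(2\tilde\bsigma-2\nabla\tilde u,\btau)_{0,\Omega}=0$ for every $\btau\in\bQ$, giving $\tilde\bsigma=\nabla\tilde u$ in $L^2$. This is exactly the computation used to motivate the choice $\alpha=2$.

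Next I would test with $(v,\bzero)$ for arbitrary $v\in V$, which gives
\[ -2(\tilde\bsigma-2\nabla\tilde u,2\nabla v)_{0,\Omega}=2\ell(v).\]
Substituting $\tilde\bsigma=\nabla\tilde u$ from the previous step, the left-hand side becomes $-2\cdot(-1)\cdot 2(\nabla\tilde u,\nabla v)_{0,\Omega}=4(\nabla\tilde u,\nabla v)_{0,\Omega}$, wait — I want to be careful: with $\alpha=2$, $\tilde\bsigma-2\nabla\tilde u=\nabla\tilde u-2\nabla\tilde u=-\nabla\tilde u$, and then $-2(-\nabla\tilde u,2\nabla v)_{0,\Omega}=4(\nabla\tilde u,\nabla v)_{0,\Omega}$; dividing by $4$ and using that $\gamma=-4$ was calibrated precisely so $\gamma/(2\alpha(1-\alpha))=1$, we obtain $(\nabla\tilde u,\nabla v)_{0,\Omega}=\ell(v)$ for all $v\in V$. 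This is the Euler--Lagrange equation of the minimisation in Problem \ref{prob1}.

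Finally, the standard Poisson variational problem has a unique solution in $H^1_0(\Omega)$ by Lax--Milgram, and Problem \ref{prob1} is equivalent to this variational equation since $K$ is strictly convex and coercive on $V$. Hence $\tilde u=u$, and combined with $\tilde\bsigma=\nabla\tilde u$ we obtain $\tilde\bsigma=\nabla u$. No step is a real obstacle; the main thing to watch is the arithmetic of constants after substitution, which is exactly why $\alpha=2$ and $\gamma=-4$ were fixed in the preceding derivation.
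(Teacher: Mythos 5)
Your argument is correct and is essentially the paper's own proof: the proposition is justified there by exactly the derivation preceding it (test with $(0,\btau)$ to get $\tilde\bsigma=\nabla\tilde u$, test with $(v,\bzero)$ and substitute to recover $(\nabla\tilde u,\nabla v)_{0,\Omega}=\ell(v)$, then conclude by uniqueness of the Poisson weak solution). One bookkeeping correction: testing the equation of Problem~\ref{prob2} directly with $(v,\bzero)$ gives $-2(\tilde\bsigma-2\nabla\tilde u,\nabla v)_{0,\Omega}=2\ell(v)$, whereas your display $-2(\tilde\bsigma-2\nabla\tilde u,2\nabla v)_{0,\Omega}=2\ell(v)$ mixes the unnormalised Euler--Lagrange equation (whose right-hand side is $-\gamma\,\ell(v)=4\ell(v)$) with the normalised form of Problem~\ref{prob2} and, taken literally, would yield $(\nabla\tilde u,\nabla v)_{0,\Omega}=\tfrac{1}{2}\ell(v)$; either consistent choice of scaling gives $(\nabla\tilde u,\nabla v)_{0,\Omega}=\ell(v)$ as you conclude.
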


 \begin{remark}
 The idea can be easily generalised to a general differential equation, which can be put in 
 a minimisation framework. For example, consider 
 the solution of the linear elastic problem of finding the displacement field $\bu \in \bV = [H^1_0(\Omega)]^d$ such that 
 \cite{Bra01} 
 \[ \bu = \argmin_{\bv \in \bV }\frac{1}{2}  \int_{\Omega} \varepsilon(\bv) : \C \varepsilon(\bv) \, dx - \ell(\bv),\]
 where $\varepsilon(\bv) = \frac{1}{2} \left( \nabla \bv + [\nabla \bv]^T\right)$ 
 is the symmetric part of the gradient,  $\C$ is the Hooke's 
 tensor, and $\ell(\cdot)$ is a linear form 
 \[ \ell(\bv) = \int_{\Omega} \fb \cdot \bv \, dx.\]
 By defining a pseudo-stress $\bsigma = \sqrt{\C} \varepsilon(\bv)$, we can put this in the above framework with 
 \[ a(\bu, \bsigma, \bv, \btau) = (\bsigma, \btau)_{0,\Omega} + (\bsigma - 2\sqrt{\C} \varepsilon(\bu), \btau - 2\sqrt{\C} \varepsilon(\bv))_{0,\Omega}.\]
 We note that since $\C$ is a symmetric positive definite tensor, its square is well-defined.
  \end{remark}

\section{Finite element approximation and a priori error estimate}\label{fe}
Let $\cT_h$ be a quasi-uniform partition of the domain  $\Omega$ in 
simplices, convex quadrilaterals or hexahedra having the mesh-size $h$. 
Let $\hat{T}$ be a reference simplex or square or cube, where the 
reference simplex is defined as 
\[ \hat T:= \{ \bx \in \BBR^d:\,x_i>0,\;i=1,\cdots,d,\;\text{and}\;\sum_{i=1}^dx_i <1\},\] 
and the reference square or cube $\hat{T}:=(0,1)^d$.
 
The finite element space is defined by affine maps $F_T$ from a 
reference element $\hat{T}$ to  
a physical element $T\in \cT_h$. 
For $k \geq 0$, let $\cQ_k(\hat T)$ be the space of polynomials 
of degree less than or equal to $k$ in $\hat T$ in the variables 
$x_1,\cdots, x_d$ if $\hat T$ is the reference simplex,  the space of 
polynomials in $\hat T$ of degree less than or equal to $k$ with 
respect to each variable $x_1,\cdots, x_d$ if $\hat T$ is  
the reference square or cube. 

Then the finite element space  based on the mesh $\cT_h$ 
is defined as  the space of continuous functions whose restrictions 
to an element $T$ are obtained by maps of given 
polynomial functions from the reference element; that is,
\begin{equation*}\label{fespace}
S_h:=\left \{v_h \in H^1(\Omega):\,
v_h|_T=\hat{v}_h\circ F^{-1}_{T},\ \ \hat{v}_h\in
\cQ_k(\hat{T}),\;T\in \cT_h\right \},
\end{equation*}
  see \cite{Cia78,QV94, BS94,Bra01}. We now define 
 $V_h : = S_h \cap H^1_0(\Omega)$.
We also want to define two other finite element
 spaces $L_h$ and $\bQ_h$ as 
\begin{equation*}\label{fespacen}
L_h:=\left \{v_h \in L^2(\Omega):\,
v_h|_T \in \cQ_k(T),\;T\in \cT_h\right \},\quad 
\bQ_h : = [L_h]^d,
\end{equation*}

Now a discrete formulation of our problem is to find 
$(u_h,\bsigma_h) \in [V_h\times \bQ_h]$ such that 
\begin{equation}\label{mvar} 
a((u_h,\bsigma_h),(v_h,\btau_h)) = 2\ell(v_h),\quad (v_h,\btau_h) \in [V_h\times \bQ_h].
\end{equation}
Since $[V_h]^d \subset \bQ$, we can also use $\bV_h = [V_h]^d$ to 
discretize the  gradient of the continuous problem. 
 This leads to a problem of finding 
$(u_h,\bsigma_h) \in [V_h\times \bV_h]$ such that 
\begin{equation}\label{mvarn} 
a((u_h,\bsigma_h),(v_h,\btau_h)) = 2\ell(v_h),\quad (v_h,\btau_h) \in [V_h\times 
\bV_h],
\end{equation}
which utilizes equal order interpolation. Since the discrete formulation 
is conforming, the bilinear form $a(\cdot,\cdot)$ and the 
linear form $\ell(\cdot)$ are both continuous on 
the corresponding spaces. 
The coercivity also follows from the continuous setting. 
\begin{theorem}
Thus the discrete problem of finding 
$(u_h,\bsigma_h) \in [V_h\times \bQ_h]$  or 
$(u_h,\bsigma_h) \in [V_h\times \bV_h]$ such that 
\[ a((u_h,\bsigma_h),(v_h,\btau_h)) = 2\ell(v_h),\quad (v_h,\btau_h) \in [V_h\times 
\bQ_h]\quad\text{or}\quad 
(v_h,\btau_h) \in [V_h\times 
\bV_h]\]
has a unique solution, and the solution satisfies 
\[ \|u-u_h\|_{1,\Omega} + 
\|\bsigma - \bsigma_h\|_{0,\Omega} 
\leq c \left(\inf_{v_h\in V_h} \|u-v_h\|_{1,\Omega}
+ \inf_{\btau_h\in \bV_h\;\text{or}\; \btau_h \in \bQ_h} \|\bsigma-\btau_h\|_{0,\Omega}\right),\]
where $u$ is the exact solution of the 
problem \eqref{vbiharm} and 
$\bsigma=\nabla u$. 
\end{theorem}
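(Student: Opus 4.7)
The plan is to apply a standard C\'ea-type argument on the product Hilbert space $V \times \bQ$ (or $V_h \times \bV_h$ if we opt for the equal-order discretisation), equipped with the natural product norm $\vertiii{(v,\btau)}^2 := \|v\|_{1,\Omega}^2 + \|\btau\|_{0,\Omega}^2$. Since the ingredients (continuity of $a(\cdot,\cdot)$, coercivity via the preceding lemma, and continuity of $\ell$) have already been established on the continuous space, and the discrete subspaces are conforming, they inherit these properties without any additional work.

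First I would dispose of existence and uniqueness for the discrete problem: because $V_h \times \bQ_h$ (resp.\ $V_h \times \bV_h$) is a closed subspace of $V \times \bQ$, the Lax--Milgram lemma applies verbatim, using the lemma's coercivity constant $\alpha/(\alpha+2C_1) = 2/(2+2C_1)$ with $\alpha=2$. Next I would record Galerkin orthogonality: subtracting the discrete variational equation from the continuous one (Problem~\ref{prob2}) yields
\[
a\bigl((u-u_h,\bsigma-\bsigma_h),(v_h,\btau_h)\bigr)=0 \qquad \forall\,(v_h,\btau_h)\in V_h\times \bQ_h \text{ (or } V_h\times \bV_h\text{)}.
\]

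With these in hand, the error bound is the textbook C\'ea argument. For any $(v_h,\btau_h)$ in the discrete space, write the error as $(u-u_h,\bsigma-\bsigma_h) = (u-v_h,\bsigma-\btau_h) + (v_h-u_h,\btau_h-\bsigma_h)$, then use coercivity on the error, Galerkin orthogonality to replace $(u_h,\bsigma_h)$ by $(v_h,\btau_h)$ in the second slot of $a$, and finally continuity of $a$ (which follows by Cauchy--Schwarz applied to each of the two terms in the definition of $a$, yielding a continuity constant $C_a$ depending only on $\alpha=2$) to obtain
\[
\frac{\alpha}{\alpha+2C_1}\,\vertiii{(u-u_h,\bsigma-\bsigma_h)}^{2} \le a\bigl((u-u_h,\bsigma-\bsigma_h),(u-v_h,\bsigma-\btau_h)\bigr) \le C_a\,\vertiii{(u-u_h,\bsigma-\bsigma_h)}\,\vertiii{(u-v_h,\bsigma-\btau_h)}.
\]
Dividing through and taking the infimum over $(v_h,\btau_h)$, then using $\vertiii{\cdot}^2 \le (\|\cdot\|_{1,\Omega}+\|\cdot\|_{0,\Omega})^2$ on the right to separate the two infima, gives the claimed estimate with $c = C_a(\alpha+2C_1)/\alpha$.

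There is no real obstacle here: the interesting work was already done in establishing coercivity of $a(\cdot,\cdot)$ via the Poincar\'e inequality. The only small care needed is in verifying continuity of $a(\cdot,\cdot)$ explicitly (expanding $\btau-2\nabla v$ and using Cauchy--Schwarz together with $\|\nabla v\|_{0,\Omega}\le \|v\|_{1,\Omega}$), and in noting that the argument is identical for the two choices $\bQ_h$ and $\bV_h$ since both are closed conforming subspaces of $\bQ$.
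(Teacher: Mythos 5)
Your proposal is correct and is precisely the ``Galerkin orthogonality and standard arguments'' that the paper's one-line proof invokes: Lax--Milgram on the closed conforming subspace for well-posedness, then the C\'ea quasi-optimality argument combining coercivity, Galerkin orthogonality, and continuity of $a(\cdot,\cdot)$. No substantive difference from the paper's intended route.
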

\begin{proof}
The proof follows from Galerkin orthogonality and 
standard arguments.
\end{proof}
\begin{remark}
The solution $u$ is assumed to be in $H^1_0(\Omega)$ only for the purpose of simplicity. 
In fact, any non-zero Dirichlet condition or mixture of Dirichlet and Neumann boundary 
conditions are all fine as in the case of the standard Galerkin finite element method. 
\end{remark}
\section{Numerical example}
In this section we consider a numerical example to demonstrate the performance of 
this new minimisation scheme. In fact, we show the discretisation errors for 
the solution $u$ in the $L^2$ and $H^1$-norms, and discretisation errors for 
the gradient in the $L^2$-norm. For this example, we consider the domain 
of the square $\Omega = [-1,1]^2$ with the exact solution 
\[ u (x,y) = ((x-y)\exp(-5.0(x-0.5)(x-0.5)-5.0(y-0.5)(y-0.5))),\]
where the right-hand side function $f$ and the Dirichlet boundary condition on $\partial \Omega$ is 
obtained by using this exact solution. The two components of the gradient are denoted by 
 $\sigma^1$ and $\sigma^2$, respectively, where their numerical approximations are denoted by 
   $\sigma_h^1$ and $\sigma_h^2$, respectively. 
We start with the initial uniform triangulation of 32 triangles in the first level and then refine 
uniformly in each level. We have tabulated the discretisation errors in Table \ref{exa}. 
We can see that the numerical results are as predicted by the theory. 
Moreover, the discretisation errors show the superiority of the 
scheme as the discretisation errors for the gradient of the solution converge quadratically to the exact solution. 
This is normally not achieved in any mixed finite element methods.  

\begin{table}[!htb]
\caption{Discretisation errors for the solution and gradient}
\begin{center}
\begin{tabular}{|c|c|c|c|c|c|c|c|c|c|}\hline
$l$ & \multicolumn{2}{c|}{$\|u -u_h\|_{1,\Omega}$} & \multicolumn{2}{c|}{$\|u -u_h\|_{0,\Omega}$}
 & \multicolumn{2}{c|}{$\|\sigma^1 -\sigma_h^1\|_{0,\Omega}$} & \multicolumn{2}{c|}{$\|\sigma^2 -\sigma_h^2\|_{0,\Omega}$} \\ \hline
  1 &    3.41208e-01 &  & 3.71839e-02 &  & 1.72948e-01 &  & 1.72948e-01 &  \\\hline
 2 &   1.70261e-01 & 1.00 & 1.15857e-02 & 1.68 & 5.39510e-02 & 1.68 & 5.39510e-02 & 1.68  \\\hline
 3 &    8.40661e-02 & 1.02 & 3.09011e-03 & 1.91 & 1.45061e-02 & 1.89 & 1.45061e-02 & 1.89  \\\hline
 4 &  4.18485e-02 & 1.01 & 7.86293e-04 & 1.97 & 3.74056e-03 & 1.96 & 3.74056e-03 & 1.96  \\\hline
 5 &  2.08998e-02 & 1.00 & 1.97503e-04 & 1.99 & 9.62306e-04 & 1.96 & 9.62306e-04 & 1.96  \\\hline
 6 & 1.04469e-02 & 1.00 & 4.94393e-05 & 2.00 & 2.51753e-04 & 1.93 & 2.51753e-04 & 1.93  \\\hline
\end{tabular}
\end{center}
\label{exa}
\end{table}

\section{Conclusion}
We have proposed a new minimisation principle 
for the Poisson equation based on  the solution and its  gradient. 
One big advantage of this formulation is that 
a finite element approximation can be performed as in a 
least-squares finite element method without 
fulfilling the compatibility condition between two 
finite element spaces. However, the finite element 
approach is much easier than in a least-squares approach. 
An optimal a priori error estimate is proved for the 
proposed formulation. A numerical example is presented to 
demonstrate the optimality of the scheme. 

\section*{Acknowledgement}
Support from the near miss grant of the University of Newcastle 
 is gratefully acknowledged.

\bibliographystyle{plain}
\bibliography{total}

\begin{thebibliography}{10}

\bibitem{ABC03}
Y.~Achdou, C.~Bernardi, and F.~Coquel.
\newblock A priori and a posteriori analysis of finite volume discretizations
  of {D}arcy's equations.
\newblock {\em Numerische Mathematik}, 96:17--42, 2003.

\bibitem{BD06}
P.B. Bochev and C.R. Dohrmann.
\newblock A computational study of stabilized, low-order ${C}^0$ finite element
  approximations of {D}arcy equations.
\newblock {\em Computational Mechanics}, 38:323--333, 2006.

\bibitem{BG98}
P.B. Bochev and M.~Gunzburger.
\newblock Finite element methods of least-squares type.
\newblock {\em SIAM Review}, 40:789--837, 1998.

\bibitem{BG05}
P.B. Bochev and M.~Gunzburger.
\newblock On least-squares finite element methods for the poisson equation and
  their connection to the {D}irichlet and {K}elvin principles.
\newblock {\em SIAM Journal on Numerical Analysis}, 43:340--362, 2005.

\bibitem{BG09}
P.B. Bochev and M.~Gunzburger.
\newblock {\em Least-Squares Finite Element Methods}.
\newblock Springer, New York, 2009.

\bibitem{Bra01}
D.~Braess.
\newblock {\em Finite Elements. Theory, Fast Solver, and Applications in Solid
  Mechanics}.
\newblock Cambridge Univ. Press, Second Edition, Cambridges, 2001.

\bibitem{BS94}
S.C. Brenner and L.R. Scott.
\newblock {\em The Mathematical Theory of Finite Element Methods}.
\newblock Springer--Verlag, New York, 1994.

\bibitem{BF91}
F.~Brezzi and M.~Fortin.
\newblock {\em Mixed and hybrid finite element methods}.
\newblock Springer--Verlag, New York, 1991.

\bibitem{BHM05}
F.~Brezzi, T.J.R. Hughes, L.D. Marini, and A.~Masud.
\newblock Mixed discontinuous galerkin methods for {D}arcy flow.
\newblock {\em Journal of Scientific Computing}, 22-23:119--145, 2005.

\bibitem{Cia78}
P.G Ciarlet.
\newblock {\em The Finite Element Method for Elliptic Problems}.
\newblock North Holland, Amsterdam, 1978.

\bibitem{Jia98}
B.~Jiang.
\newblock {\em The Least-Squares Finite Element Method: Theory and Applications
  in Computational Fluid Dynamics and Electromagnetics}.
\newblock Springer--Verlag, 1998.

\bibitem{Lam12a}
B.P. Lamichhane.
\newblock Mixed finite element methods for the poisson equation using
  biorthogonal and quasi-biorthogonal systems.
\newblock {\em Advances in Numerical Analysis}, 2013:9 pages, 2013.

\bibitem{MTW02}
K.A. Mardal, X.-C. Tai, and R.~Winther.
\newblock A robust finite element method for {D}arcy--{S}tokes flow.
\newblock {\em SIAM Journal on Numerical Analysis}, 40:1605--1631, 2002.

\bibitem{QV94}
A.~Quarteroni and A.~Valli.
\newblock {\em Numerical approximation of partial differential equations}.
\newblock Springer--Verlag, Berlin, 1994.

\bibitem{Ste10}
R.~Stenberg.
\newblock A nonstandard mixed finite element family.
\newblock {\em Numerische Mathematik}, 115:131--139, 2010.

\bibitem{UDG06}
J.M. Urquiza, D.N'. Dri, A.~Garon, and M.C. Delfour.
\newblock A numerical study of primal mixed finite element approximations of
  {D}arcy equations.
\newblock {\em Communications in Numerical Methods in Engineering},
  22:901--915, 2006.

\end{thebibliography}
\end{document}